\newtheorem{thm}{Theorem}[section]
\newtheorem{cor}[thm]{Corollary}
\newtheorem{prop}[thm]{Proposition}
\newtheorem{conj}[thm]{Conjecture}
\theoremstyle{definition}
\newtheorem{defn}[thm]{Definition}
\theoremstyle{remark}
\newtheorem{rem}[thm]{Remark}
\numberwithin{equation}{section}
\def\cp{\mathcal{ P}}
\def\gb{{\frak B}}
\def\bn{{\mathbb N}}
\def\br{{\mathbb R}}
  \def\G{\Gamma}
\def\xb{{\mathbf{x}}}
\def\eb{{\mathbf{e}}}
\def\yb{{\mathbf{y}}}
\def\zb{{\mathbf{z}}}
\def\yb{{\mathbf{y}}}
\def\ib{{\mathbf{i}}}
\begin{document}
\title
{On Non-Linear Markov Operators: surjectivity vs orthogonal
preserving property}

\author{Farrukh Mukhamedov}
\address{Farrukh Mukhamedov\\
 Department of Mathematical Sciences, College of Science, The United Arab Emirates University, P.O. Box 15551, Al Ain, Abu Dhabi, UAE} \email{{\tt
farrukh.m@uaeu.ac.ae}}

\author{Ahmad Fadillah Embong}
\address{Ahmad Fadillah Embong\\
Department of Computational \& Theoretical Sciences, Faculty of
Science, International Islamic University Malaysia, 25200, Kuantan,
Pahang, Malaysia} \email{{\tt ahmadfadillah.90@gmail.com}}

\maketitle

\begin{abstract}
In the present paper, we consider nonlinear Markov operators, namely
polynomial stochastic operators. We introduce a notion of orthogonal
preserving polynomial stochastic operators. The purpose of this
study is to show that surjectivity of nonlinear Markov operators is
equivalent to their orthogonal preserving property. \noindent {\it
Mathematics Subject Classification}: 47H25, 37A30,
47H60\\
{\it Key words}: polynomial stochastic operator; surjective;
orthogonal preserving;

\end{abstract}

\section{Introduction}

Recently, nonlinear Markov chains are intensively studied by many
scientists (see \cite{Kol} for recent review). A process described
by a nonlinear Markov operator is a discrete time stochastic process
whose transitions may depend on both the current state and the
present distribution of the process. The power of nonlinear Markov
operator as a modeling tool and its range of applications are
immense, and include non-equilibrium statistical mechanics,
evolutionary biology (replicator dynamics), population and disease
dynamics (Lotka-Volterra and epidemic models) and the dynamics of
economic and social systems (replicator dynamics and games).

 The simplest nonlinear Markov chain is described by a quadratic stochastic operator (QSO)
which is associated with a cubic stochastic matrix. This kind of
operator arises in the problem of describing the evolution of
biological populations \cite{11}. The notion of QSO was first
introduced by Bernstein
 \cite{1} and the theory of QSOs was developed in many works (see for example \cite{K,11,32}).
In \cite{6,MG2015book}, it is given along self-contained exposition
of the recent achievements and open problems in the theory of the
QSOs.

Letting $ I_{m} = \{ 1, \dots, m \} $, a straightforward calculation
shows that if a stochastic linear operator $ L : S^{m-1} \rightarrow
S^{m-1} $ is surjective (here $S^{m-1}$ is the set of all
probability distributions on $I_m$), then, for each $ i \in I_{m} $,
there exists a $ j \in I_{m} $ such that $ L^{-1}(\eb_{i}) = \eb_{j}
$, where
 $ L^{-1}(\eb_{i}) $ is the preimage of the vertex $ \eb_{i} $ of the simplex $ S^{m-1} $.
 Unfortunately, this is not the case when we consider nonlinear
 case. On the other hand, the surjectivity
 of a nonlinear operator is strongly tied up with nonlinear optimization problems \cite{Arut2012}.
 The criteria for the
 surjectivity of QSOs was given in \cite{man(2016)2d}. The obtained
 criteria together with results of \cite{taha} implies that a QSO is surjective if and only if it is orthogonal preserving.
In \cite{MFR} we have check this property for cubic stochastic
operators, and described all surjective cubic stochastic operators
on two-dimensional simples. Hence, it is natural to study the same
implications for general non-linear Markov operators.

In this paper, we introduce a notion of orthogonal preservness for
nonlinear Markov operators, and show that the surjectivity of this
kind of operators is equivalent to their orthogonal preserving
property.

\section{Preliminaries}

Let us recall some necessary notations. Let $ I_{m} = \{ 1, \dots, m
\} $. The complement of a set $ A \subset I_{m} $ is denoted by $
A^{c} = I_{m} \backslash A $. By $\{\eb_i\}_{i\in I_m}$ we denote
the standard basis in $\br^m$. Throughout this paper we consider the
simplex as
\begin{eqnarray}\label{kthresidue}
S^{m-1} = \left\{\textbf{x}\in\br^m\ : \ x_i\geq0, \ \forall i\in
I_m, \quad \sum\limits_{i=1}^{m}x_i = 1\right\} \label{eqn1.1}.
\end{eqnarray}
An element of the simplex $S^{m-1}$ is called a \textit{stochastic
vector}.

For a every $ \xb \in S^{m-1} $ we set
$$ supp(\xb) = \left\{ i \in
I_{m}\ :  x_{i} \neq 0 \right\}, \ \  null(\xb) = \{ i\in \bn \ :
x_{i}=0 \}.$$ We define the facet $ \G_{A} $ of the simplex $
S^{m-1} $ by setting $ \G_{A} = conv\{\eb_{i} \}_{i \in A} $, here
$conv( B )$ stands for the convex hull of a set $ B $. Let
\[ int \G_{A} = \{ \xb \in \G : x_{i}>0,\ \forall \ i \in A \} \]
be the relative interior of $ \G_{A} $.

Let $\cp = (P_{i_1\dots,i_k} )^{m}_{i_1,...,i_k=1}$ be a $k$-order
$m$-dimensional hypermatrix. We define the following vectors
$$
P_{i_{1}\cdots i_{l},\bullet} = (P_{i_{1}\cdots i_{l},1}, \dots,
P_{i_{1}\cdots i_{l},m}), $$ for any $i_1,\dots, i_k\in I_m$.
 For the sake of simplicity we use $
i_{[1,l]} = i_{1},\dots,i_{l} $ for index. A hypermatrix $
\mathcal{P} $ is called \textit{stochastic} if  each vector $
P_{i_{[1,l]},\bullet} $ is stochastic for any $ i_{1},\dots, i_{l}
\in I_{m} $.

Let $\cp$ be a stochastic hypermatrix, then it defines a
\textit{nonlinear Markov operator} (or \textit{polynomial stochastic
operator (PSO)}) $ \mathfrak{B} : S^{m-1} \rightarrow S^{m-1} $ as
follows
\[ \mathfrak{B}(\xb) = \sum\limits_{i_{1}=1}^{m} \cdots \sum\limits_{i_{l}=1}^{m}x_{i_{1}} \cdots x_{i_{l}} P_{i_{[i,l]},\bullet}, \ \ \xb\in S^{m-1}.  \]
Throughout this paper, without loss of generality, we assume that
\begin{eqnarray}\label{eqn_prop_P}
P_{i_{[1,l]},\bullet} =  P_{{\pi} (i_{[1,l]}),\bullet}
\end{eqnarray}
for any $i_1,\dots, i_l\in I_m$ and any permutation ${\pi}$ of the
indices.

\begin{rem} We notice that if the hypermatrix $\cp$ is given by the
cubic matrix $(p_{ij,k})$, then the associated PSO reduces to the
\textit{quadratic stochastic operator (QSO)} given by
\begin{eqnarray*}
 V(\xb)_k=\sum\limits_{i,j=1}^m p_{ij,k}x_ix_j, \ \ k\in I_m.
\end{eqnarray*}
\end{rem}

\begin{rem} We stress that a PSO $\gb_0$ associated with a stochastic hypermatrix $\cp$ of the
$\ell-1$-order can be considered as a particular case of PSO
associated with a stochastic hypermatrix of $\ell$-order. Indeed,
assume that $(P_{i_{[1,i_{\ell-1}]}})$ is an $\ell-1$-order
stochastic hypermatrix. Now define $\ell$-order hypermatrix by
$$
\tilde
P_{i_1,\dots,i_{\ell-1},k}=\frac{1}{n}\sum_{\textrm{combinations} \
\{i_s\}_{s=1}^{\ell-1} \atop \textrm{over}\  \ell-2 \
\textrm{places}}P_{i_1,\dots,i_{\ell-2},k}
$$
Then direct calculations show that
\begin{eqnarray*}
\mathfrak{B}(\xb)&=&\sum\limits_{i_{1}=1}^{m} \cdots
\sum\limits_{i_{\ell-1}=1}^{m}x_{i_{1}} \cdots x_{i_{\ell-1}} \tilde
P_{i_{[i,\ell-1]},\bullet}\\[2mm]
&=&\sum\limits_{i_{1}=1}^{m} \cdots
\sum\limits_{i_{\ell-1}=1}^{m}x_{i_{1}} \cdots x_{i_{\ell-1}}\bigg(
\frac{1}{n}\sum_{\textrm{combinations} \ \{i_s\}_{s=1}^{\ell-1}
\atop \textrm{over}\  \ell-2 \
\textrm{places}}P_{i_1,\dots,i_{\ell-2},\bullet}\bigg)\\[2mm]
&=&\sum\limits_{i_{1}=1}^{m} \cdots
\sum\limits_{i_{\ell-2}=1}^{m}x_{i_{1}} \cdots
x_{i_{\ell-2}}P_{i_1,\dots,i_{\ell-2},\bullet}\\[2mm]
&=&\gb_0(\xb).
\end{eqnarray*}
This means that $\gb_0$ is a particular case of $\gb$. Using the
provided technique, one can show that any QSO is also particular
case of PSO. Therefore, methods used for QSO may not be valid in
general setting.
\end{rem}

We recall $ \xb \in S^{m-1}$ is \textit{orthogonal} or
\textit{singular} to $ \yb \in S^{m-1}$ ($ \xb \perp \yb $) if and
only if $ supp(\xb) \cap supp(\yb) = \emptyset $.
 It is clear that $ \xb \perp \yb $ if and only if $ x_{k} \cdot y_{k} =0 $ for all $ k \in  I_{m} $ whenever $ \xb, \yb \in S^{m-1} $.

\begin{defn}
A PSO $ \gb $ is called \textit{orthogonal preserving (OP PSO)} if
for any $ \xb,\yb \in S^{m-1} $ with $ \xb \perp \yb $ implies $
\gb(\xb) \perp \gb(\yb) $.
\end{defn}

An absorbing state plays an important role in the theory of the
classical (linear) Markov chains. Analogously, in
\cite{man(2016)fin} it has been introduced the concept of absorbing
sets for nonlinear Markov chains.

\begin{defn}
A subset $ A \subset I_{m} $ is called \textit{absorbing} if $ A^{c}
= \bigcap\limits_{i_{[1,l]}\in A}null(P_{i_{[i,l]},\bullet})$
\end{defn}

It was proven in \cite{man(2016)fin,man(2016)2d} the following
results:
\begin{prop}\label{prop_4 ele}
The following statements hold:
\begin{itemize}
\item[(i)] $ supp(\gb(\xb)) = \bigcup\limits_{i_{[1,l]} \in supp(\xb)}supp(P_{i_{[1,l]},\bullet}) $
\item[(ii)] $ null(\gb(\xb)) = \bigcap\limits_{i_{[1,l]} \in supp(\xb)}null(P_{i_{[1,l]},\bullet}) $
\item[(iii)] $ \gb(int \G_{A}) \subset int \G_{B} \textmd{ where } B=\bigcup\limits_{i_{[1,l]}\in A}supp(P_{i_{[1,l]},\bullet}) $
\item[(iv)] $ \gb(int \G_{A}) \subset int \G_{B} \textmd{ if and only if } \gb(\xb^{(0)}) \in int\G_{B} \textmd{ for some } \xb^{(0)} \in int\G_{A}  $
\end{itemize}
\end{prop}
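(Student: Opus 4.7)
The plan is to extract all four items from a single observation---that the support of $\gb(\xb)_k$ is governed by which monomials in the polynomial expansion are nonzero---and then reorganize the information. Accordingly, I would start with (i). Fixing $k \in I_m$ and writing
\[
\gb(\xb)_k = \sum_{i_1, \dots, i_l=1}^{m} x_{i_1} \cdots x_{i_l} P_{i_{[1,l]}, k},
\]
I would exploit that $\xb \in S^{m-1}$ and that $\cp$ is stochastic, so every summand is nonnegative. Hence $\gb(\xb)_k > 0$ if and only if at least one summand is strictly positive, which is equivalent to the existence of indices $i_1,\dots,i_l \in supp(\xb)$ with $k \in supp(P_{i_{[1,l]},\bullet})$. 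Ranging over $k$ yields (i).

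Item (ii) would follow from (i) at once by taking complements of both sides inside $I_m$ and applying De Morgan's law, so that a union of supports converts into an intersection of null sets.

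For (iii), I would use that $\xb \in int\G_A$ is exactly the condition $supp(\xb) = A$; substituting into (i) gives $supp(\gb(\xb)) = B$, which is the defining property of $\gb(\xb) \in int\G_B$. Note also that $B$ is manifestly a subset of $I_m$, so the conclusion is well-defined regardless of whether $B = I_m$ or a proper facet.

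For (iv), the forward implication is immediate since $int\G_A$ is nonempty. The converse is the one place where (i) does the real work: the formula in (i) shows that $supp(\gb(\xb))$ depends only on $supp(\xb)$. Hence if a single $\xb^{(0)} \in int\G_A$ satisfies $\gb(\xb^{(0)}) \in int\G_B$, then for every $\xb \in int\G_A$ we have $supp(\gb(\xb)) = supp(\gb(\xb^{(0)})) = B$, and thus $\gb(\xb) \in int\G_B$. I do not anticipate a genuine obstacle here; the substantive content is (i), which reduces to the elementary fact that a sum of nonnegative numbers is positive iff at least one summand is positive, and items (ii)--(iv) are formal consequences of (i) combined with the definition of the relative interior of a facet.
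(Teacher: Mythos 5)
Your proof is correct. Note that the paper itself gives no proof of this proposition---it is quoted as a known result from the cited works of Saburov---so there is no in-text argument to compare against; your reduction of everything to the observation that $\gb(\xb)_k$ is a sum of nonnegative terms (hence positive iff some monomial $x_{i_1}\cdots x_{i_l}P_{i_{[1,l]},k}$ is positive), together with the identification $\xb\in int\,\G_A \Leftrightarrow supp(\xb)=A$, is exactly the standard argument and correctly yields (ii)--(iv) as formal consequences of (i).
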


\begin{prop}\label{prop_eqval}
Let $ A \subset I_{m} $ be a subset. The following statements are equivalent:
\begin{itemize}
\item[(i)] The set $ A $ is absorbing
\item[(ii)] $ \gb(int\G_{A}) \subset int\G_{A} $
\item[(iii)] $ \gb(\xb^{(0)}) \in int\G_{A} \textmd{ for some } \xb^{(0)} \in int\G_{A} $
\end{itemize}
\end{prop}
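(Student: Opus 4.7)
The plan is to establish the cycle (i) $\Rightarrow$ (ii) $\Rightarrow$ (iii) $\Rightarrow$ (i), leveraging Proposition \ref{prop_4 ele} throughout. The key preliminary observation, which I would state and verify first, is a reformulation of absorbingness in terms of supports rather than null sets: by taking complements in the defining identity $A^{c} = \bigcap_{i_{[1,l]} \in A} null(P_{i_{[1,l]},\bullet})$, the condition ``$A$ is absorbing'' is equivalent to
\[
\bigcup_{i_{[1,l]} \in A} supp(P_{i_{[1,l]},\bullet}) \;=\; A.
\]
This reformulation is what directly meshes with statements (i)--(iii) of Proposition \ref{prop_4 ele}.

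For (i) $\Rightarrow$ (ii), I would apply Proposition \ref{prop_4 ele}(iii) with this $A$: the set $B$ produced by that proposition is precisely $\bigcup_{i_{[1,l]} \in A} supp(P_{i_{[1,l]},\bullet})$, which by the reformulation equals $A$ itself, yielding $\gb(int\G_{A}) \subset int\G_{A}$. The implication (ii) $\Rightarrow$ (iii) is immediate, since $int\G_{A}$ is nonempty and any $\xb^{(0)} \in int\G_{A}$ then has $\gb(\xb^{(0)}) \in int\G_{A}$ by hypothesis.

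For the closing implication (iii) $\Rightarrow$ (i), I would take the given $\xb^{(0)} \in int\G_{A}$, note that by definition $supp(\xb^{(0)}) = A$, and apply Proposition \ref{prop_4 ele}(i) to get
\[
supp\bigl(\gb(\xb^{(0)})\bigr) \;=\; \bigcup_{i_{[1,l]} \in A} supp(P_{i_{[1,l]},\bullet}).
\]
On the other hand, $\gb(\xb^{(0)}) \in int\G_{A}$ forces $supp(\gb(\xb^{(0)})) = A$. Equating the two yields the support identity above, which is exactly the reformulation of (i).

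None of the steps involve a genuine obstacle; the only point that requires care is the bookkeeping between the ``null'' form of the absorbing condition given in Definition 2.2 and the ``support'' form used by Proposition \ref{prop_4 ele}. Once that bridge is in place, the three implications essentially quote items (i), (iii), and the definition of $int\G_{A}$ respectively. I would therefore present the proof as a short three-line round-trip, with the initial complementation remark isolated at the top for clarity.
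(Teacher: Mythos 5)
Your proof is correct: the complementation bridge between the ``null'' form of Definition 2.4 and the ``support'' form, followed by the round-trip (i)~$\Rightarrow$~(ii)~$\Rightarrow$~(iii)~$\Rightarrow$~(i) via Proposition \ref{prop_4 ele}(iii), the nonemptiness of $int\G_{A}$, and Proposition \ref{prop_4 ele}(i), is exactly the intended argument. Note that the paper itself does not prove this proposition --- it is quoted from \cite{man(2016)fin,man(2016)2d} --- so there is nothing to diverge from; your derivation is the standard one and fills that gap cleanly.
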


\section{Surjectivity and Orthogonal Preservness of PSOs}

In this section, we prove the main result of the whole paper.
Namely, we will establish  that the surjectivity of PSO is
equivalent to its OP property. First, we need some auxiliary facts.

\begin{prop}\label{prop_less_3}
    If any subset $ A \subset I_{m} $ with $ \vert A \vert \leq l $ is absorbing, then all subsets of $ I_{m} $ are absorbing. \end{prop}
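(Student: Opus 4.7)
The plan is to work directly from the definition of absorbing set, after first rewriting it in a more convenient form. Taking complements in Definition~2.3, a set $A \subset I_m$ is absorbing if and only if
\[
A \;=\; \bigcup_{i_{[1,l]} \in A} supp(P_{i_{[1,l]},\bullet}),
\]
where $i_{[1,l]} \in A$ is shorthand for a tuple $(i_1,\dots,i_l) \in A^l$. Given an arbitrary $B \subset I_m$, the strategy is to establish this equality for $B$ by proving the two inclusions separately, in each case invoking the hypothesis on a carefully chosen subset of $B$ of size at most $l$.

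For the inclusion $\bigcup_{i_{[1,l]} \in B} supp(P_{i_{[1,l]},\bullet}) \subset B$, I would fix any tuple $(i_1,\dots,i_l)$ with entries in $B$ and set $A = \{i_1,\dots,i_l\}$. Since $|A| \leq l$, by hypothesis $A$ is absorbing, and because the tuple itself lies in $A^l$ the reformulation above forces $supp(P_{i_{[1,l]},\bullet}) \subset A \subset B$. For the reverse inclusion $B \subset \bigcup_{i_{[1,l]} \in B} supp(P_{i_{[1,l]},\bullet})$, I would pick any $k \in B$ and apply the hypothesis to the singleton $\{k\}$, of cardinality $1 \leq l$. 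The only tuple with all coordinates in $\{k\}$ is the diagonal $(k,\dots,k)$, so the reformulation forces $supp(P_{(k,\dots,k),\bullet}) = \{k\}$; since $(k,\dots,k) \in B^l$ this exhibits $k$ in the claimed union.

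There is no serious obstacle here. The main thing to notice is that every $l$-tuple drawn from $B$ is contained in some subset of $B$ of size at most $l$, which is exactly what lets the hypothesis propagate from small subsets to $B$. Also, the reverse inclusion genuinely needs the singleton case, since the first inclusion only constrains \emph{where} the supports may lie and not \emph{which} elements of $B$ are actually hit. A more geometric repackaging through Proposition~\ref{prop_eqval}---pick $\xb \in int\G_B$ and compute $supp(\gb(\xb))$ via Proposition~\ref{prop_4 ele}(i)---leads to exactly the same bookkeeping without any gain in brevity.
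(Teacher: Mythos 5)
Your proposal is correct and is essentially the paper's own argument in complemented form: the paper establishes that $supp(P_{i_{[1,l]},\bullet})\subset\{i_1,\dots,i_l\}$ for any tuple (from absorbingness of sets of size $\leq l$) and that $P_{\underbrace{i\cdots i}_{l},\bullet}=\eb_i$ (from singletons), then computes $\bigcap_{i_{[1,l]}\in B} null(P_{i_{[1,l]},\bullet})=B^c$, which is exactly your two inclusions for the union of supports. The only difference is bookkeeping with $supp$ and unions rather than $null$ and intersections.
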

\begin{proof}
    Using the assumption one can check that for any $ i,j_{0} \in I_{m} $ we have
    \begin{eqnarray}\label{eqn_val_P}
    \left\{
    \begin{array}{ll}
    P_{\ib,\bullet} = {\eb}_{i}, \  P_{i_{[1,l]} \in \{ j_{1}, j_{2} \} ,\bullet} \in conv\{\eb_{j_{1}},\eb_{j_{2}}\},\cdots,\\[2mm]
    P_{i_{[1,l]} \in \{ j_{1},\dots j_{l} \} ,\bullet} \in
    conv\{\eb_{j_{1}},\dots,\eb_{j_{l}}\}\\
    \end{array}
    \right.
    \end{eqnarray}
    where $ j_{k} \neq j_{n} $, $ k,n \in I_{l} $, $\ib=\underbrace{i\cdots
    i}_l$.
    To see this, we consider a set $ A^{(k)} = \{  j_{1},\dots, j_{k} \} $ with $ k \leq l $. Clearly, if  $ n \notin A^{(k)}  $,
    then $ P_{i_{[1,l]} \in \{ j_{[1,k]} \}, n } =0 $. This yields \eqref{eqn_val_P}.


    Keeping in mind that the coefficients $\{ P_{i_{[1,l+1]}}\} $ satisfy \eqref{eqn_prop_P}, and due to the fact  \eqref{eqn_val_P} one had
     $ null(P_{i_{[1,l]},\bullet}) \supset I_{m}\backslash \{ i_{1},\dots, i_{l} \} $, therefore for any $ B\subset I_{m} $, one finds
    \begin{eqnarray}
    \bigcap\limits_{i_{1},\dots, i_{l} \in B} null(P_{i_{[1,l],\bullet}})&=&  \bigcap\limits_{i \in B} null(P_{\underbrace{i\cdots i}_{l},\bullet})\cap
    \bigcap\limits_{\stackrel{i_{1}\neq i_{2}}{i_{1},i_{2} \in B}} null(P_{ \underbrace{i_{1}\dots i_{1}}_{l-1}i_{2} ,\bullet})\cap \cdots \nonumber \\
    && \bigcap\limits_{\stackrel{i_{1}\neq \cdots \neq i_{l}}{i_{1}, \dots, i_{l} \in B}} null(P_{i_{[1,l]},\bullet}) \nonumber\\
    &=& I_{m}\backslash B = B^{c} \nonumber
    \end{eqnarray}

    This means that $ B $ is absorbing. This completes the proof.
\end{proof}

\begin{prop} \label{thm_car3_abs_sur}
    If any subset $ A \subset I_{m} $ with $ \vert A \vert \leq l $ is absorbing, then the associated PSO $ \gb:S^{m-1} \rightarrow S^{m-1} $ is surjective.

\end{prop}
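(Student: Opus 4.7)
By Proposition~\ref{prop_less_3} the hypothesis is equivalent to \emph{every} subset of $I_m$ being absorbing, so Proposition~\ref{prop_eqval} yields $\gb(\G_A)\subseteq\G_A$ for every $A\subseteq I_m$, while the identity $P_{\ib,\bullet}=\eb_i$ from \eqref{eqn_val_P} forces $\gb(\eb_i)=\eb_i$ for every vertex. My plan is to induct on the dimension $m$, using this face-preserving and vertex-fixing structure, and close the interior case by a topological argument.

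\textbf{Boundary surjectivity via induction.} The case $m=1$ is trivial. For the inductive step, fix a proper $A\subsetneq I_m$. The restriction $\gb|_{\G_A}$ is a well-defined self-map of $\G_A$, is itself a PSO of order $l$ on the $(|A|-1)$-dimensional simplex $\G_A$, and inherits from $\gb$ the property that every subset of $A$ is absorbing. By the inductive hypothesis $\gb|_{\G_A}\colon\G_A\to\G_A$ is surjective, so ranging over all proper $A$ gives
\[
\gb\bigl(\partial S^{m-1}\bigr)=\partial S^{m-1}.
\]

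\textbf{From the boundary to the interior; main obstacle.} To upgrade this to full surjectivity I will argue by contradiction using Brouwer degree. The identity map on $S^{m-1}$ is itself realised as a PSO of order $l$ satisfying the hypothesis by taking $\tilde P_{i_1\cdots i_l,k}=\tfrac{1}{l}\sum_{j=1}^{l}\delta_{i_jk}$, so the straight-line homotopy $\gb_t:=(1-t)\,\mathrm{id}+t\,\gb$, $t\in[0,1]$, consists of PSOs still satisfying the hypothesis, each of which preserves $\partial S^{m-1}$; hence $\gb|_{\partial S^{m-1}}$ is homotopic to $\mathrm{id}|_{\partial S^{m-1}}$ through self-maps of $\partial S^{m-1}$, and so has topological degree $1$. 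Now if some $\yb_0\in\mathrm{int}\,S^{m-1}$ were missed by $\gb$, radial projection $r_{\yb_0}\colon S^{m-1}\setminus\{\yb_0\}\to\partial S^{m-1}$ from $\yb_0$ would give a continuous extension $r_{\yb_0}\circ\gb\colon S^{m-1}\to\partial S^{m-1}$ of $\gb|_{\partial S^{m-1}}$ (since $r_{\yb_0}$ fixes $\partial S^{m-1}$ pointwise), forcing $\gb|_{\partial S^{m-1}}$ to be null-homotopic and contradicting degree~$1$. The inductive reduction onto facets is essentially bookkeeping once Proposition~\ref{prop_less_3} is in hand; the genuinely delicate step is this final passage, since mere boundary surjectivity is \emph{not} enough for full surjectivity of an arbitrary continuous self-map, and one must exploit the specific polynomial structure (here, the explicit realisation of the identity as a PSO) to supply the nonzero-degree input that drives the retraction obstruction.
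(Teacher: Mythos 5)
Your proof is correct and follows the same two-stage strategy as the paper: use Propositions \ref{prop_less_3} and \ref{prop_eqval} to conclude that $\gb$ preserves every open face, run an induction on the dimension to obtain surjectivity of $\gb$ on $\partial S^{m-1}$, and then invoke an algebraic-topological obstruction to show that no interior point can be missed. The only substantive divergence is in that last step: the paper composes $\gb$ with the radial projection $g$ away from a putative missed point $\yb$ and derives a contradiction with Brouwer's fixed point theorem (face preservation is what rules out fixed points of $g\circ\gb$ on the boundary), whereas you show that $\gb|_{\partial S^{m-1}}$ has degree $1$ via the straight-line homotopy to the identity (face preservation is what keeps the homotopy inside $\partial S^{m-1}$) and then contradict the fact that a map of $\partial S^{m-1}$ which extends over the ball is null-homotopic. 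These two devices are standard equivalents, so nothing is gained or lost; note only that your homotopy does not need to pass through PSOs --- convexity of the open faces already makes $(1-t)\,\mathrm{id}+t\,\gb$ boundary-preserving --- so the explicit realisation of the identity as an order-$l$ PSO is decoration. A minor difference in the base case: the paper starts the induction at $m=2$, verifying surjectivity by hand via the intermediate value theorem applied to the one-variable polynomial $f(x)=\gb(x,1-x)_{1}$ with $f(0)=0$ and $f(1)=1$, while you start at $m=1$ and let the topological argument absorb $m=2$; both are fine.
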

\begin{proof}
   According to Propositions \ref{prop_eqval} and \ref{prop_less_3} we infer that the associated PSO $ \gb:S^{m-1} \rightarrow S^{m-1} $ maps each facet of
   the simplex into itself. To show the operator $ \gb $ is surjective, we use mathematical induction with respect to the dimension of the simplex.
    In the case of $ m=2 $, we can write $ \gb $ (see to \eqref{eqn_val_P}) as follows:
    {\tiny
        \begin{eqnarray}
        \gb(\xb)_{1} &=& x_{1}^{l}+\binom l1 P_{1\cdots 12,1} x_{1}^{l-1}x_{2}+ \cdots + \binom {l}{l-(l-k)}P_{\underbrace{\tiny 1\cdots 1}_{l-k} \underbrace{2\cdots 2}_{k} ,1 }x_{1}^{l-k}x_{2}^{k} + \cdots + \binom l1 P_{1\underbrace{\tiny 2\cdots 2}_{l-1} ,1} x_{1}^{}x_{2}^{l-1}, \nonumber \\
        \gb(\xb)_{2} &=& x_{1}^{l}+\binom l1 P_{1\cdots 12,2} x_{1}^{l-1}x_{2}+
        \cdots + \binom {l}{l-(l-k)}P_{\underbrace{\tiny 1\cdots 1}_{l-k} \underbrace{2\cdots 2}_{k} ,2 }x_{1}^{l-k}x_{2}^{k} + \cdots +
        \binom l1 P_{1\underbrace{\tiny 2\cdots 2}_{l-1} ,2} x_{1}^{}x_{2}^{l-1}, \nonumber
        \end{eqnarray}
    }
        where $ \xb = (x_{1},x_{2}) \in S^{1} $. It is enough for us to study $ \gb(\xb)_{1} $ because
        of $ \gb(\xb)_{1}+\gb(\xb)_{2}=1 $.
        Let
        { \tiny
        $$  f(x) = x^{l}+\binom l1 P_{1\cdots 12,1} x^{l-1}(1-x)+ \cdots + \binom {l}{l-(l-k)}P_{\underbrace{\tiny 1\cdots 1}_{l-k} \underbrace{2\cdots 2}_{k} ,1 }x^{l-k}(1-x)^{k} + \cdots + \binom l1 P_{1\underbrace{\tiny 2\cdots 2}_{l-1} ,1} x^{}(1-x)^{l-1}. $$
}
        One can see that  $ f(x) \leq 1  $
        and continuous on interval $ [0,1] $. Due to $ f(0)=0 $ and $ f(1)=1 $, one concludes that $ f(x) $ is surjective over interval $ [0,1] $,
        hence it implies the surjectivity of $ \gb(\xb) $.
        Thus, the statement is true for $ m=2 $. Furthermore, we assume that the statement holds for $ m \leq n-1 $, and we will prove it for $ m=n $.
        From the assumption, if we restrict the mapping of $ \gb $ to the facet, then the mapping is surjective i.e., $ \gb:\partial S^{n-1} \rightarrow \partial S^{n-1} $ is surjective. Now, consider $ \yb \in intS^{n-1} $. Here, surjectivity means that the set $ \gb^{-1}(\yb) $ is nonempty. To prove this statement, we use contradiction by supposing  the  set $ \gb^{-1}(\yb) $ is empty.
    We define a mapping $ g: S^{n-1}\backslash \{\yb\} \rightarrow \partial S^{n-1} $ which maps every point $ \zb \in S^{n-1}\backslash \{\yb\} $ to the intersection point of the ray starting from $ \zb $ in the direction of $ \yb $ with the boundary of the simplex. It is easy to check that the mapping $ \mathcal{F}:S^{n-1} \rightarrow S^{n-1}, \ \mathcal{F} = g\circ \gb $ does not have any fixed point. However, this contradicts to the Brouwer fixed point theorem. This completes the proof.
\end{proof}

\begin{thm}
    Let $ \gb $ be a PSO that maps from $ S^{m-1} $ into itself such that $ \gb(\eb_{i}) = \eb_{i} $ for all $ i \in I_{m} $.
    Then the following statements are equivalent:
    \begin{itemize}
    \item[(i)] $ \gb $ is orthogonal preserving;
    \item[(ii)] $ \gb $ is surjective;
    \item[(iii)] $ \gb $ satisfies the following conditions:
    \begin{itemize}
        \item[(1)] $ \gb^{-1}(\eb_{i}) = \eb_{i} \textmd{ for any } i \in I_{m} $
        \item[(2)] $ \gb^{-1}(int \G_{\eb_{i_{1}}\eb_{i_{2}}})=int \G_{\eb_{i_{1}}\eb_{i_{2}}}  \textmd{ for any } i_{1},i_{2} \in I_{m}, $ \\
        \vdots
        \item[($ l $)] $ \gb^{-1}(int \G_{\eb_{i_{1}} \cdots \eb_{i_{l}}})=int \G_{\eb_{i_{1}} \cdots \eb_{i_{l}}} \textmd{ for any } i,j \in I_{m}, $
    \end{itemize}
    where $  \G_{\eb_{i_{1}}\cdots \eb_{i_{l}}} = conv\{ \eb_{i_{1}},\dots,\eb_{i_{l}} \}
    $.
    \end{itemize}
\end{thm}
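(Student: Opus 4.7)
The plan is to close the equivalences as a cycle $(i)\Rightarrow(iii)\Rightarrow(ii)\Rightarrow(i)$, resting on two structural observations used throughout. First, the hypothesis $\gb(\eb_i)=\eb_i$ is equivalent to $P_{\ib,\bullet}=\eb_i$ for $\ib=\underbrace{i\cdots i}_l$, so Proposition \ref{prop_4 ele}(i) always gives the lower bound $supp(\xb)\subseteq supp(\gb(\xb))$. Second, the absorbing-set machinery of Propositions \ref{prop_eqval}, \ref{prop_less_3}, and \ref{thm_car3_abs_sur} lets us cash in facewise invariance of $\gb$ for surjectivity.

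For $(i)\Rightarrow(iii)$ I would fix $A\subset I_m$ with $|A|\leq l$ and verify both inclusions defining $\gb^{-1}(int\G_A)=int\G_A$. The matching upper bound on $supp(\gb(\xb))$ is what OP supplies: for any $\xb$ with $C=supp(\xb)$ and any $k\notin C$, $\eb_k\perp\xb$, so OP combined with $\gb(\eb_k)=\eb_k$ forces $k\notin supp(\gb(\xb))$. Together with the lower bound this upgrades to $supp(\gb(\xb))=supp(\xb)$ for every $\xb$. Plugging in $\xb\in int\G_A$ yields $\gb(\xb)\in int\G_A$, while conversely $\gb(\xb)\in int\G_A$ pins down $supp(\xb)=A$.

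For $(iii)\Rightarrow(ii)$ each condition in the list produces $\gb(int\G_A)\subseteq int\G_A$ whenever $|A|\leq l$, so Proposition \ref{prop_eqval} renders every such $A$ absorbing. Proposition \ref{prop_less_3} then promotes this to absorbingness of every subset of $I_m$, and Proposition \ref{thm_car3_abs_sur} delivers surjectivity.

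The delicate direction is $(ii)\Rightarrow(i)$, and I expect it to be the main obstacle. I would argue by a minimal counterexample. Assume $\gb$ is surjective and that some subset of $I_m$ fails to be absorbing; pick $A$ of smallest cardinality among such sets. By Proposition \ref{prop_4 ele}(iii) one has $\gb(int\G_A)\subseteq int\G_B$ with $A\subsetneq B$, so $\gb(int\G_A)\cap int\G_A=\emptyset$. Surjectivity produces $\xb$ with $\gb(\xb)\in int\G_A$, and the lower bound forces $C:=supp(\xb)\subseteq supp(\gb(\xb))=A$ with $C\neq A$, whence $|C|<|A|$. Minimality of $A$ makes $C$ absorbing, so $\gb(\xb)\in int\G_C$; but $int\G_C$ and $int\G_A$ are disjoint when $C\neq A$, a contradiction. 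Hence every subset of $I_m$ is absorbing, which combined with the lower bound from the fixed-vertex hypothesis gives $supp(\gb(\xb))=supp(\xb)$ on all of $S^{m-1}$, and OP is immediate via the support characterization of $\perp$. The crux of the whole argument is this minimal-counterexample bookkeeping; the other two implications are essentially translations between the definition of OP, Proposition \ref{prop_4 ele}, and Propositions \ref{prop_eqval}--\ref{thm_car3_abs_sur}.
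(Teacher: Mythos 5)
Your proof is correct, but it is organized differently from the paper's. You close the cycle as $(i)\Rightarrow(iii)\Rightarrow(ii)\Rightarrow(i)$, whereas the paper runs $(i)\Rightarrow(ii)\Rightarrow(iii)\Rightarrow(ii)\Rightarrow(i)$, and two of your steps are genuinely different in substance. For $(i)\Rightarrow(iii)$ you work entirely at the level of supports: testing OP against the vertices $\eb_k$ with $k\notin supp(\xb)$ and combining with the lower bound $supp(\xb)\subseteq supp(\gb(\xb))$ from Proposition \ref{prop_4 ele}(i) gives $supp(\gb(\xb))=supp(\xb)$ everywhere, which is exactly (iii); the paper instead tests OP against the specific vectors $\xb^{(n)}$ uniform on $I_m\setminus\{n\}$ to extract the coefficient identities \eqref{eqn_val_P} and then invokes Propositions \ref{prop_less_3} and \ref{thm_car3_abs_sur} to land on (ii). For $(ii)\Rightarrow(i)$ your minimal-counterexample argument on non-absorbing sets (a non-absorbing $A$ of least cardinality forces, via Proposition \ref{prop_4 ele}(iii) and surjectivity, a preimage supported on a strictly smaller, hence absorbing, set $C$ whose face is disjoint from $int\G_A$) is self-contained and in fact shows directly that \emph{every} subset of $I_m$ is absorbing; the paper instead leans on its already-proved $(ii)\Rightarrow(iii)$ (the preimage-support analysis of vertices and low-dimensional faces) to get absorbingness of sets of cardinality at most $l$, and then concludes OP from the factorization $\gb(\xb)_k=x_k(\cdots)$ in \eqref{BB}. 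The trade-off: the paper's route produces the explicit canonical form of the coefficients, which it reuses afterwards, while your route is shorter, avoids the coefficient bookkeeping entirely, and isolates the single equivalent condition $supp(\gb(\xb))=supp(\xb)$ as the hinge of all three statements. Your $(iii)\Rightarrow(ii)$ coincides with the paper's.
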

\begin{proof}
To prove the theorem,  we will establish the following implications:
$ (i) \Rightarrow (ii) \Rightarrow (iii) \Rightarrow (ii)
\Rightarrow (i)$.

$ (i) \Rightarrow (ii) $. Let $ \gb $ be an orthogonal preserving
PSO. Due to the assumption (i.e., $ \gb(\eb_{i}) = \eb_{i} $), one
has
\[ P_{\underbrace{i\cdots i}_{l},\bullet} = \eb_{i} \]
Now, choose $$ \xb^{(n)} = \left( \dfrac{1}{m-1},
\dots,\dfrac{1}{m-1}, \underbrace{0}_{n^{th}term},\dfrac{1}{m-1},
\dots, \dfrac{1}{m-1} \right)  $$ and $ \eb_{n} $, where $ n \in
I_{m} $. Clearly $ \xb^{(n)} $ is orthogonal to $ \eb_{n} $. Using
the definition of PSO, we have
\begin{eqnarray}
\gb(\xb^{(n)})_{n} &=& \sum\limits_{i_{1}, \dots, i_{l}=1}^{m}P_{i_{[1,l]},n}x_{i_{1}}\cdots x_{i_{l}} \nonumber
 =  \dfrac{1}{(m-1)^{l}} \sum\limits_{\stackrel{ i_{1}, \dots, i_{l}=1 }{ i_{1} \neq n , \dots, i_{l} \neq n }}^{m} P_{i_{[1,l]},n}
\end{eqnarray}
From the orthogonal preservness of $ \gb $, we infer that $
\gb(\xb^{(n)}) $ is orthogonal to $ \gb(\eb_{n}) $, whence
\[ P_{i_{[1,l]},n} =0 \textmd{ if } i_{1} \neq n , \dots, i_{l} \neq n \]
This yields \eqref{eqn_val_P}, therefore all subsets $ A \subset
I_{m} $ with $ \vert A \vert \leq l $ are absorbing. According to
Proposition \ref{thm_car3_abs_sur} we obtain that $ \gb $ is
surjective.

$ (ii) \Rightarrow (iii) $. Assume that $ \gb $ is surjective and
let $ \gb^{-1}(\eb_i) $ be the preimage of $ \eb_{i} $. We set
\[ supp(\gb^{-1}(\eb_i)) = \bigcup\limits_{\xb \in \gb^{-1}(\eb_i)}supp(\xb), \ \ \G_{supp(\gb^{-1}(\eb_i))} = conv\{ \eb_{j} \}_{j \in supp(\gb^{-1}(\eb_i))}  \]

Due to Proposition \ref{prop_4 ele} one gets $
\gb(\G_{supp(\gb^{-1}(\eb_{i}))}) =\eb_{i} $. Consequently,
\[ \{\eb_{j}\}_{j \in supp(\gb^{-1}(\eb_{i}))} \subset \gb^{-1}(\eb_{i}) \ \textmd{ for any } i\in I_{m}. \]
It implies that $ |supp(\gb^{-1}(\eb_{i})) =1 | $, which means that
only $ \eb_{i} $ that maps to $ \eb_{i} $ hence we obtain (iii)(1).

Further, let $ k \in \{ 2,\dots,l \} $. Take $ \yb \in
int\G_{\eb_{i_{1}}\cdots \eb_{i_{k}}} $ and let $ \xb \in
\gb^{-1}(\yb) $. Using Proposition \ref{prop_4 ele} we have
\[ \gb(int\G_{supp(\xb)} ) \subset int\G_{\eb_{i_{1}} \cdots \eb_{i_{k}}} \]
In fact, we have
\[ supp(\xb) = \{ i_{1},\dots, i_{k} \} \ \textmd{ for any } \xb \in V^{-1}(\yb). \]
If not, then there exists $ k' \in supp(\xb) \backslash \{
i_{1},\dots, i_{k} \} \neq \emptyset $. Then $ V(\eb_{k'}) \in
V(int\G_{supp(\xb)}) \subset \G_{\eb_{i_{1}}\cdots \eb_{i_{k}}} $,
which is a contradiction. Therefore we obtain (iii)($k$) for any $ k
\in \{ 2,\dots,l \} $.

$ (iii) \Rightarrow (ii) $. This implication immediately follows
from Proposition \ref{thm_car3_abs_sur}.

$ (ii) \Rightarrow (i) $ Due to the surjectivity of $ \gb $  and
condition $ \gb(\eb_{i}) = \eb_{i} $ one gets any subset $ A \subset
I_{m} $ with $ \vert A \vert \leq l $ is absorbing. It follows from
\eqref{eqn_val_P} that
\begin{eqnarray}\label{BB}
\gb(\xb)_{k} &=& \sum\limits_{i_{1},\dots,i_{k} = 1}^{m}P_{i_{[1,l]},k}x_{i_{1}}\cdots x_{i_{l}} =
x_{k} \left( \sum\limits_{i_{2},\dots,i_{l} = 1}^{m}P_{ki_{[2,l]},k}x_{i_{2}} \cdots x_{i_{l}}\right)  
\end{eqnarray}
for any $ \xb \in S^{m-1} $ and $ k \in I_{m} $. Next, take any two
orthogonal vectors  $ \xb = (x_{1},\dots,x_{m})$ and $\yb
=(y_{1},\dots,y_{m}) $ in the simplex $ S^{m-1} $. This means that
for any fix $ k \in I_{m} $ $ x_{k}=0 $ or $ y_{k}=0 $. Therefore,
from \eqref{BB} we infer that $ \gb(\xb)_{k}\cdot \gb(\yb)_{k} = 0 $
which yields the orthogonality $ \gb(\xb) $ and $ \gb(\yb) $. This
completes the proof.
\end{proof}

Immediately, from the last theorem one concludes the following
corollary.

\begin{cor}
    Let $ \gb $ be a CSO that maps from $ S^{m-1} $ into itself. Then the following statements are
    equivalent:
    \begin{itemize}
        \item[(i)] $ \gb $ is orthogonal preserving;
        \item[(ii)] $ \gb $ is surjective;
        \item[(iii)] $ \gb $ satisfy the following conditions:
        \begin{itemize}
            \item[(1)] $ \gb^{-1}(\eb_{i}) = \eb_{\pi(i)} \textmd{ for any } i \in I_{m}
            $,
            \item[(2)] $ \gb^{-1}(int \G_{\eb_{i_{1}}\eb_{i_{2}}})=int \G_{\eb_{\pi(i_{1})}\eb_{\pi(i_{2})}}  \textmd{ for any } i_{1},i_{2} \in I_{m}, $ \\
            \vdots
            \item[($ l $)] $ \gb^{-1}(int \G_{\eb_{i_{1}} \cdots \eb_{i_{l}}})=int \G_{\eb_{\pi(i_{1})} \cdots \eb_{\pi(i_{l})}} \textmd{ for any } i,j \in I_{m}, $
        \end{itemize}
        for some permutation $\pi$ of $I_m$.
    \end{itemize}
\end{cor}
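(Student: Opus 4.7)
The plan is to reduce the corollary to the main theorem of this section by extracting, under each of the three hypotheses, a permutation $\pi$ of $I_m$ along which $\gb$ permutes the vertex set $\{\eb_i\}_{i\in I_m}$, and then transferring the three equivalences via composition with the corresponding linear coordinate permutation.

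The first step is to show that each of (i), (ii), (iii) already forces $\gb$ to map vertices to vertices bijectively. Under (i), the vertices are pairwise orthogonal, so the OP property makes $supp(\gb(\eb_1)),\dots,supp(\gb(\eb_m))$ pairwise disjoint non-empty subsets of $I_m$, which by a cardinality count must all be singletons and hence define $\pi^{-1}$. Under (ii), for each $i$ choose some $\xb^{(i)}\in\gb^{-1}(\eb_i)$; evaluating Proposition~\ref{prop_4 ele}(i) at the diagonal index $k\cdots k$ for every $k\in supp(\xb^{(i)})$ yields $\gb(\eb_k)=P_{kk\cdots k,\bullet}=\eb_i$, and since $\gb$ is a function the sets $supp(\xb^{(i)})$ are pairwise disjoint as $i$ varies, so by pigeonhole they are singletons $\{\pi(i)\}$. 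Under (iii), clause (1) supplies $\pi$ directly.

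The second step is to straighten the vertex permutation away. Let $\rho\colon S^{m-1}\to S^{m-1}$ be the linear stochastic operator defined on the basis by $\rho(\eb_i)=\eb_{\pi^{-1}(i)}$, and set $\tilde\gb=\rho\circ\gb$. Then $\tilde\gb$ is again a PSO — its hypermatrix is obtained from $\cp$ by relabelling the last index through $\pi^{-1}$ — and it satisfies $\tilde\gb(\eb_i)=\eb_i$ for every $i\in I_m$, so the main theorem of this section applies to $\tilde\gb$. Because $\rho$ is a bijective coordinate permutation of $S^{m-1}$, one verifies routinely that $\gb$ is OP iff $\tilde\gb$ is OP, that $\gb$ is surjective iff $\tilde\gb$ is surjective, and that the preimage conditions $(1),\dots,(l)$ in the corollary translate term-by-term into the corresponding preimage conditions of the main theorem once the codomain vertices are relabelled by $\pi$. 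The three equivalences for $\tilde\gb$ thus yield the three equivalences for $\gb$.

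The principal obstacle is the implication surjectivity $\Rightarrow$ vertex permutation in Step~1: surjectivity alone does not manifestly involve vertices, and one must promote a single preimage of each $\eb_i$ into a statement about every vertex in its support. The leverage comes from evaluating Proposition~\ref{prop_4 ele}(i) at the diagonal multi-indices $k\cdots k$ to convert the support identity into a vertex-to-vertex identity, together with the pigeonhole count on $m$ pairwise disjoint non-empty subsets of $I_m$; once this is in place, Steps~2 and 3 reduce to bookkeeping along $\pi$.
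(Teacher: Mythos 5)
Your proposal is correct, and it supplies exactly the argument the paper leaves implicit: the paper merely asserts that the corollary follows ``immediately'' from the theorem, but the theorem carries the extra hypothesis $\gb(\eb_i)=\eb_i$, so the real content of the corollary is your Step~1 --- that each of (i), (ii), (iii) separately forces $\gb$ to permute the vertices (via disjointness of the supports $supp(\gb(\eb_i))$ under OP, and via Proposition~\ref{prop_4 ele}(i) applied to the diagonal indices $k\cdots k$ plus pigeonhole under surjectivity) --- followed by conjugation with the coordinate permutation to land in the theorem's setting. That is the intended reduction, just written out. One small bookkeeping slip: with your convention $\gb(\eb_i)=\eb_{\pi^{-1}(i)}$, the straightening map must be $\rho(\eb_i)=\eb_{\pi(i)}$ (not $\eb_{\pi^{-1}(i)}$), since otherwise $\tilde\gb(\eb_i)=\rho(\eb_{\pi^{-1}(i)})=\eb_{\pi^{-2}(i)}\neq\eb_i$ in general; with that sign fixed, the transfer of OP, surjectivity, and the preimage conditions $(1),\dots,(l)$ along $\rho$ goes through as you describe.
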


\begin{rem} It is known \cite{man(2016)2d} that if $V$ is a surjective QSO, then it is
bijection.  Therefore, we formulate the following conjecture.
\end{rem}

\begin{conj} Any surjective PSO is bijection.
\end{conj}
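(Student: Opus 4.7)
The plan is to exploit the main theorem of this section to reduce the conjecture to a one-dimensional monotonicity question. By the Corollary, any surjective PSO $\gb$ must satisfy $\gb(\eb_{i}) = \eb_{\pi(i)}$ for some permutation $\pi$ of $I_{m}$, so first I would precompose with $\pi^{-1}$ (a coordinate relabeling, hence itself a bijection of $S^{m-1}$) to reduce to the case $\gb(\eb_{i}) = \eb_{i}$. The Theorem would then supply $\gb^{-1}(\mathrm{int}\,\G_{A}) = \mathrm{int}\,\G_{A}$ for every $A \subset I_{m}$ with $|A| \leq l$, extended to all $A$ by Proposition~\ref{prop_less_3}. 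Thus $\gb$ respects the face decomposition, and any two points with a common image share the same support, so bijectivity splits into a separate bijectivity question on each open face.

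Next I would induct on $m$. Assuming the conjecture on every strictly smaller simplex, $\gb$ restricts to a bijection on each proper face and hence on the whole boundary $\partial S^{m-1}$. To upgrade to the interior, I would revisit the ray-projection construction $\mathcal{F} = g \circ \gb$ used in the proof of Proposition~\ref{thm_car3_abs_sur} and combine it with a local-degree argument to rule out a second interior preimage of any interior target: such a preimage would force $\gb$ to fold part of a neighbourhood onto a boundary that is already bijectively covered, producing a fixed point of $\mathcal{F}$ and contradicting the Brouwer conclusion already exploited in Proposition~\ref{thm_car3_abs_sur}.

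The hard part will be the base case $m = 2$, and here I expect the plan to break down. On $S^{1}$, $\gb$ collapses to the one-dimensional polynomial $f(x) = \sum_{j=0}^{l} a_{j} B_{j}^{l}(x)$ in Bernstein form with $a_{0}=0$, $a_{l}=1$, $a_{j} \in [0,1]$, so bijectivity is equivalent to the strict monotonicity of $f$ on $[0,1]$. Unfortunately, the stated constraints do not force monotonicity: the choice $l=4$ with coefficients $(a_{0},\dots,a_{4}) = (0,1,0,0,1)$ gives $f(x) = x^{4} + 4x(1-x)^{3}$, which arises from the admissible symmetric stochastic hypermatrix $P_{1111,\bullet} = P_{1222,\bullet} = \eb_{1}$ and $P_{1112,\bullet} = P_{1122,\bullet} = P_{2222,\bullet} = \eb_{2}$; the resulting $\gb$ is orthogonal preserving (on $S^{1}$ only $\eb_{1}\perp\eb_{2}$, and both are fixed) and surjective, yet $f'(0.3) = 0.108 + 1.372 - 1.764 < 0$, so the value $f(1/2) = 5/16$ is attained at more than one point of $(0,1)$. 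Consequently my plan cannot succeed as stated, and I would expect the conjecture to require either additional structural assumptions on $\cp$ beyond~\eqref{eqn_val_P} or a restriction to $l = 2$ (the QSO case, where quadratic Bernstein polynomials with these endpoint conditions are automatically monotone); otherwise the conjecture appears to need amendment.
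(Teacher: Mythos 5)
The statement you were asked to prove is labelled a \emph{conjecture} in the paper: the authors supply no proof, only the remark that the case $l=2$ (QSOs) is known from \cite{man(2016)2d} and a pointer to sufficient conditions for bijectivity in \cite{MS-new}. So there is no proof in the paper to compare yours against, and the real content of your proposal is not the inductive scaffolding of the first two paragraphs (which you never get to deploy) but the final paragraph, where you exhibit what is in fact a correct \emph{counterexample} to the conjecture as stated. The hypermatrix with $P_{1111,\bullet}=P_{1222,\bullet}=\eb_1$ and $P_{1112,\bullet}=P_{1122,\bullet}=P_{2222,\bullet}=\eb_2$ is symmetric and stochastic, hence defines a legitimate PSO ($l=4$, $m=2$) whose first coordinate on $S^1$ is $f(x)=x^4+4x(1-x)^3=x\left(x^3+4(1-x)^3\right)$. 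Since $0\le f\le 1$, $f(0)=0$ and $f(1)=1$, the operator is surjective (and orthogonal preserving, consistently with the paper's Theorem, which only asserts the equivalence of surjectivity, the OP property and the face conditions, never injectivity). But $f'(x)=4x^3+4(1-x)^2(1-4x)$ gives $f'(0)=4>0$ and $f'(1/2)=-1/2<0$, so $f$ is not monotone on $[0,1]$ and therefore not injective; indeed $f$ rises to a local maximum near $x\approx0.26$, falls to a local minimum near $x\approx0.61$, and values such as $f(1/2)=5/16$ are attained three times. Note also that $f$ is exactly of the Lotka--Volterra form \eqref{BB}, so the example simultaneously shows that surjectivity does not force the sufficient conditions of \cite{MS-new}.

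Two caveats on the rest of your write-up. First, paragraphs one and two claim more than the paper's results deliver: condition (iii) of the Theorem controls only the faces $\G_A$ with $|A|\le l$ and pins down preimages of single points only at the vertices, so ``any two points with a common image share the same support'' does not follow for interior targets; and the Brouwer/ray-projection argument of Proposition \ref{thm_car3_abs_sur} produces \emph{existence} of preimages, not uniqueness --- your own example shows no degree-one argument can be extracted from the given hypotheses. Second, your closing hedge is the right conclusion and could be stated more forcefully: the conjecture is simply false for general $l$ (already at $l=4$, $m=2$), it is true for $l=2$ by \cite{man(2016)2d}, and the case $l=3$ (the CSO setting of \cite{MFR}) would need a separate check since the admissible Bernstein coefficients there leave much less room for non-monotonicity.
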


We point out that some sufficient conditions for the bijectivity of
PSO of the form \eqref{BB} has been provided in \cite{MS-new}.

\end{document}